\documentclass[reqno]{amsart}
\usepackage{amsfonts}
\usepackage{amssymb}
\usepackage{amsmath}
\usepackage{amsthm}
\usepackage{amscd}
\usepackage{mathrsfs}
\usepackage{graphicx, color}
\usepackage{url}
\usepackage[stable]{footmisc}

\theoremstyle{plain}
\newtheorem{theorem}{Theorem}[section]
\newtheorem{lemma}[theorem]{Lemma}

\newtheorem{proposition}[theorem]{Proposition}

\theoremstyle{definition}

\allowdisplaybreaks[1]

\newcommand{\RotE}{\mathrm{Rot} \hspace{0.1em} \mathbb{E}^{2}}

\begin{document}

\title[R-equivalence and inner automorphisms]{R-equivalence of quandle colorings and \\ inner automorphisms}
\author{Mai Sato}
\address{Department of Mathematics, Tsuda University, 2-1-1 Tsuda-machi, Kodaira-shi, Tokyo 187-8577, Japan}
\email{md2461sm@gm.tsuda.ac.jp}

\subjclass[2020]{57K12, 57K10}
\keywords{quandle, coloring, R-equivalence, $2$-bridge knot}

\begin{abstract}
R-equivalence is an equivalence relation on the set of colorings of an oriented knot diagram by a quandle.
In this paper, we show that a certain subgroup of the inner automorphism group of a quandle acts on the R-equivalence class of a given coloring by the quandle.
We also determine the R-equivalence classes of colorings of a diagram of a $2$-bridge knot by a dihedral quandle completely, under a certain condition.
\end{abstract}

\maketitle

\section{Introduction}
\label{sec:introduction}

A quandle, introduced by Joyce \cite{J1982}, is an algebraic system whose axioms have close relationships with Reidemeister moves for oriented knot diagrams.
Although it is called a distributive groupoid instead of a quandle, the same notion was also introduced by Matveev \cite{M1984}.
For each quandle $X$, we may consider $X$-colorings of an oriented knot diagram.
A finite sequence of Reidemeister moves and planar isotopies naturally induces a bijection between the $X$-colorings of the original diagram and those of the resulting diagram.
The number of $X$-colorings gives us an elementary but useful knot invariant.

We may deform a diagram to itself by a finite sequence of Reidemeister moves and planar isotopies.
On the other hand, the bijection induced from the sequence may send an $X$-coloring of the diagram to a different one.
To study this phenomenon, the author defined those colorings to be R-equivalent \cite{S2026}.
In \cite{S2026}, she focused on the quandle $\RotE$, and completely determined the R-equivalence classes of $\RotE$-colorings of a diagram of a torus knot under a certain condition.
She introduced several key deformations of diagrams of torus knots, and showed that any pair of R-equivalent $\RotE$-colorings is related by a finite sequence of these deformations.

Finding key deformations of diagrams, which might relate any R-equivalent colorings, is an effective way to study R-equivalence.
On the other hand, it depends heavily on the given diagram and quandle.
Thus, in this paper, we study a property of R-equivalence that holds for an arbitrary diagram and quandle.
We show that a certain subgroup of the inner automorphism group of $X$ acts on the R-equivalence class of a given $X$-coloring (Theorem \ref{main_theorem1}).
Utilizing Theorem \ref{main_theorem1}, we completely determine the R-equivalence classes of colorings of a diagram of a $2$-bridge knot by a dihedral quandle under a certain condition (Theorem \ref{application_of_main_theorem}).

This paper is organized as follows.
In Section \ref{sec:preliminaries}, we review $X$-colorings and their R-equivalence.
We prove Theorem \ref{main_theorem1} in Section \ref{sec:main_theorem}.
Finally, in Section \ref{sec:application}, we show Theorem \ref{application_of_main_theorem}.

\section{Preliminaries}
\label{sec:preliminaries}

In this section, we review the notion of R-equivalence.
To do it, we start with recalling the definition of a quandle.
For details on quandles, we refer the reader to \cite{K2017}.

A \emph{quandle} is a non-empty set $X$ equipped with a binary operation $\ast : X \times X \to X$ satisfying the following three axioms.
\begin{itemize}
 \item[Q1.] For any $x \in X$, $x \ast x = x$.
 \item[Q2.] For any $y \in X$, the map $S_{y} : X \to X$ given by $S_{y}(x) = x \ast y$ is bijective.
 \item[Q3.] For any $x, y, z \in X$, $\left( x \ast y \right) \ast z = \left( x \ast z \right) \ast \left( y \ast z \right)$.
\end{itemize}
For each $x, y \in X$ and $\varepsilon \in \{ \pm 1 \}$, we let $x \ast^{\varepsilon} y$ denote $S^{\varepsilon}_{y}(x)$.

The notion of a homomorphism is appropriately defined for quandles.
Let $X$ be a quandle, and $\mathrm{Aut}(X)$ denote the \emph{automorphism group} of $X$.
By the axioms Q2 and Q3, for each $y \in X$, $S_{y}$ is an automorphism of $X$.
Define $\mathrm{Inn}(X)$ to be the subgroup of $\mathrm{Aut}(X)$ generated by the automorphisms $S_{y}$ ($y \in X$).
We call an element of $\mathrm{Inn}(X)$ an \emph{inner automorphism} of $X$, and $\mathrm{Inn}(X)$ the \emph{inner automorphism group} of $X$.
The inner automorphism group $\mathrm{Inn}(X)$ acts on $X$ from the right by $x \cdot S_{y} = x \ast y$ ($x, y \in X$).
We say that $X$ is \emph{connected} if $\mathrm{Inn}(X)$ acts on $X$ transitively.

A subset $Y$ of $X$ is said to be a \emph{subquandle} of $X$ if $Y$ is also a quandle with respect to the binary operation on $X$ restricted to $Y$.
We note that any subset of $X$ consisting of a single element becomes a subquandle of $X$.
For each subset $Z$ of $X$, there exists a minimal subquandle of $X$ which includes $Z$.
We call this the subquandle of $X$ \emph{generated by $Z$}, and write it as $\langle Z \rangle$.

Let $D$ be an oriented knot diagram.
A map $\mathscr{C} : \{ \text{all arcs of }D \} \to X$ is called an \emph{$X$-coloring} of $D$ if $\mathscr{C}$ satisfies the condition depicted in Figure \ref{coloring_condition} at each crossing of $D$.
In the figure, $x$, $y$ and $x \ast y$ denote the elements of $X$ assigned to the corresponding arcs by $\mathscr{C}$.
We call them the \emph{colors} of the arcs.
We note that any constant map $\mathscr{C}$ satisfies the above condition.
We thus call such a map a \emph{trivial $X$-coloring} of $D$.
We let $\mathrm{Col}_{X}(D)$ denote the set of $X$-colorings of $D$.
\begin{figure}[htbp]
 \centering
 \includegraphics[scale=0.5]{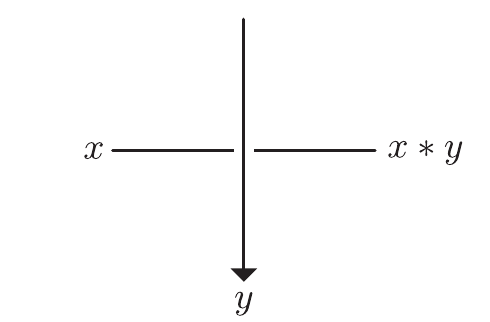}
 \caption{The condition for an $X$-coloring.}
 \label{coloring_condition}
\end{figure}

Suppose that a Reidemeister move RI, RII or RIII takes $D$ to a diagram $D^{\prime}$.
Then, for each $X$-coloring $\mathscr{C}$ of $D$, we have a unique $X$-coloring $\mathscr{C}^{\prime}$ of $D^{\prime}$ which assigns the same colors with $\mathscr{C}$ for the arcs unrelated to the deformation and consistent colors for the others as depicted in Figure \ref{colored_Reidemeister_moves}.
Therefore, we obtain a bijection $\mathrm{Col}_{X}(D) \to \mathrm{Col}_{X}(D^{\prime})$.
We note that the axioms of a quandle guarantee the existence and uniqueness of $\mathscr{C}^{\prime}$.
Suppose instead that a planar isotopy takes $D$ to $D^{\prime}$.
Then, by assigning the same colors to the corresponding arcs, we obtain a bijection $\mathrm{Col}_{X}(D) \to \mathrm{Col}_{X}(D^{\prime})$.
In conclusion, a finite sequence of Reidemeister moves and planar isotopies taking $D$ to $D^{\prime}$ induces a bijection $\mathrm{Col}_{X}(D) \to \mathrm{Col}_{X}(D^{\prime})$.
\begin{figure}[htbp]
 \centering
 \includegraphics[scale=0.5]{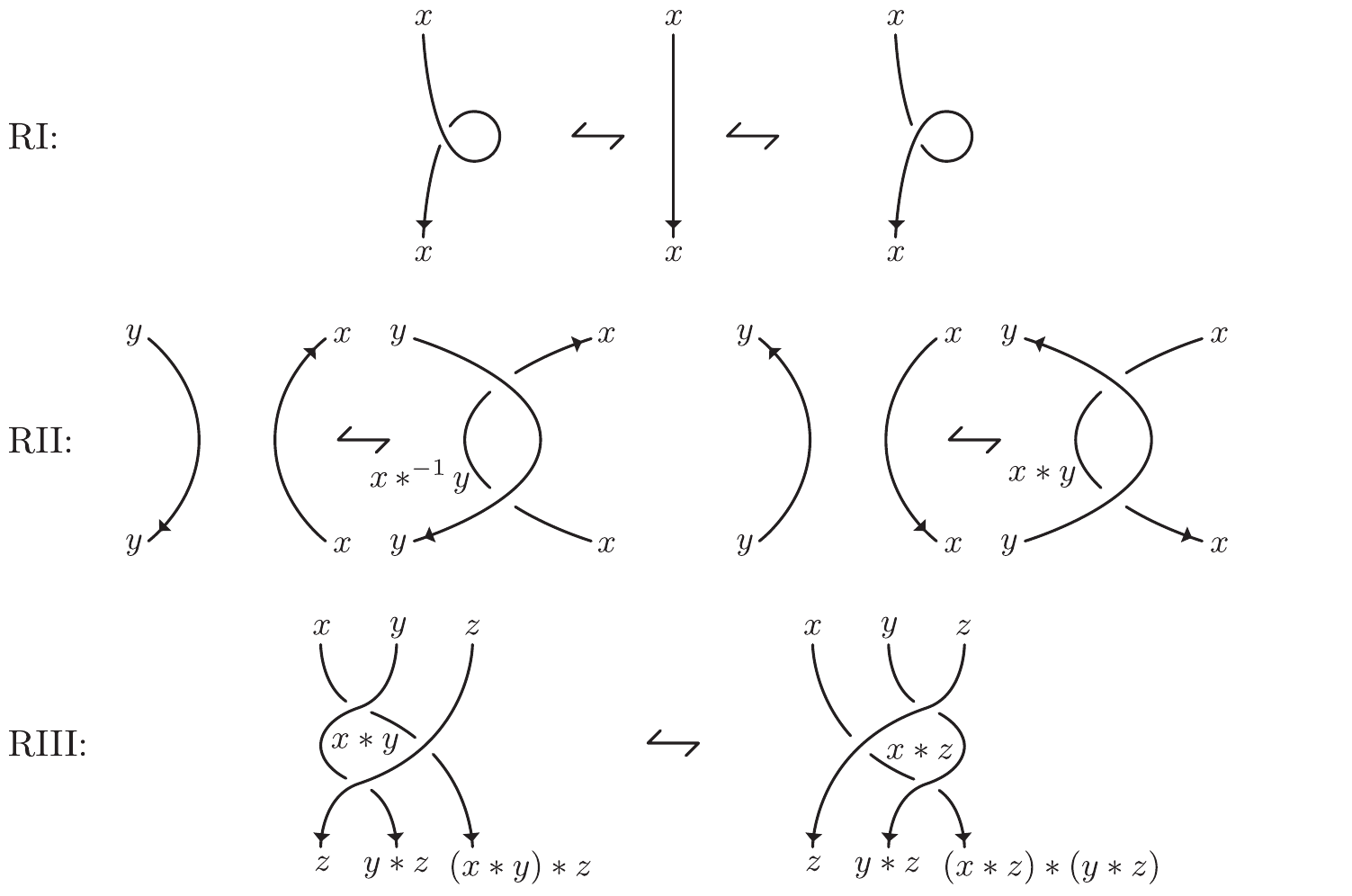}
 \caption{Reidemeister moves induce bijections $\mathrm{Col}_{X}(D) \to \mathrm{Col}_{X}(D^{\prime})$.}
 \label{colored_Reidemeister_moves}
\end{figure}

Let $\mathscr{C}_{1}$ and $\mathscr{C}_{2}$ be $X$-colorings of a diagram $D$.
The coloring $\mathscr{C}_{1}$ is said to be \emph{R-equivalent} to $\mathscr{C}_{2}$ if there exists a finite sequence of Reidemeister moves and planar isotopies sending $\mathscr{C}_{1}$ to $\mathscr{C}_{2}$.
Clearly, R-equivalence yields an equivalence relation on $\mathrm{Col}_{X}(D)$.
We note that a trivial $X$-coloring of $D$ is not R-equivalent to any other $X$-colorings of $D$.
We also note that the concept of R-equivalence does not depend on the choice of a diagram of a given oriented knot.
More precisely, if a finite sequence of Reidemeister moves and planar isotopies taking $D$ to $D^{\prime}$ sends $X$-colorings $\mathscr{C}_{1}$ and $\mathscr{C}_{2}$ of $D$ to $X$-colorings $\mathscr{C}^{\prime}_{1}$ and $\mathscr{C}^{\prime}_{2}$ of $D^{\prime}$, respectively, then $\mathscr{C}^{\prime}_{1}$ and $\mathscr{C}^{\prime}_{2}$ are R-equivalent if and only if $\mathscr{C}_{1}$ and $\mathscr{C}_{2}$ are.

\section{Main Theorem}
\label{sec:main_theorem}

For a quandle $X$ and an oriented knot diagram $D$, we may define a right action of $\mathrm{Inn}(X)$ on $\mathrm{Col}_{X}(D)$ by $\mathscr{C} \cdot S = S \circ \mathscr{C}$.
The aim of this section is to establish the following theorem.

\begin{theorem}
 \label{main_theorem1}
 Let $X$ be a quandle, $D$ an oriented knot diagram, and $\mathscr{C}$ an $X$-coloring of $D$.
 For any $S \in \mathrm{Inn}( \langle \mathrm{Im} \, \mathscr{C} \rangle )$, $\mathscr{C}$ is R-equivalent to $\mathscr{C} \cdot S$.
\end{theorem}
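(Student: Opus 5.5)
Since $\mathrm{Inn}(\langle \mathrm{Im}\,\mathscr{C}\rangle)$ is generated by the maps $S_{y}^{\pm 1}$ with $y \in \langle \mathrm{Im}\,\mathscr{C}\rangle$, and R-equivalence is an equivalence relation, it suffices to show that $\mathscr{C}$ is R-equivalent to $\mathscr{C}\cdot S_{y}^{\pm1}$ for each such $y$. Here $S_{y}$ is taken as an automorphism of the subquandle, which agrees with the restriction of $S_{y}$ on $X$ to $\langle \mathrm{Im}\,\mathscr{C}\rangle$. Once a single generator is handled for an arbitrary coloring, we can iterate, since $\mathscr{C}\cdot S_{y}$ has image in the same subquandle. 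The argument then has two stages. First we treat $y$ equal to the color of an arc of $D$. Then we reduce general $y \in \langle \mathrm{Im}\,\mathscr{C}\rangle$ to that case.

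\emph{Step 1: $y=\mathscr{C}(a)$ for an arc $a$.} Take a small part of the arc $a$ and perform a ``lasso'' deformation. Pull a loop of $a$ far away, drag it around, and pass it over the entire rest of the diagram, so that in the end $D$ sits under a circle-like strand colored $y$ (realize the knot as $a$'s strand encircling everything). Concretely, in $S^{2}$ one can isotope the small segment of $a$ over the point at infinity. This is a sequence of RII and RIII moves in which the rest of the diagram passes under a strand colored $y$, together with planar isotopy on $S^{2}$, which is equivalent to R-moves in $\mathbb{R}^{2}$. As the strand sweeps across, every arc it crosses gets its color changed by $\ast^{\pm1} y$, with the sign fixed by orientation. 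When the sweep has passed over the whole diagram and returned, the diagram is $D$ again and each arc's color is $\mathscr{C}(b)\ast^{\pm1}y$. One must check that the color of $a$ itself stays consistent: $y\ast y=y$ by Q1. Using the opposite direction of sweep gives the other sign. I expect this step to be the main obstacle. It requires a careful description of the sweep as Reidemeister moves and of the colors along the way, so that the final diagram literally equals $D$ and the final coloring is exactly $S_{y}\circ\mathscr{C}$.

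\emph{Step 2: general $y$.} Every element of $\langle \mathrm{Im}\,\mathscr{C}\rangle$ has the form $y = c\cdot T$ with $c\in\mathrm{Im}\,\mathscr{C}$ and $T$ a product of $S_{c_i}^{\pm1}$ with $c_i \in \mathrm{Im}\,\mathscr{C}$. This holds because the set of such elements contains $\mathrm{Im}\,\mathscr{C}$ and is closed under $\ast^{\pm1}$, using $S_{x\ast z}=S_z^{-1}S_xS_z$. That same identity gives $S_{y}=T^{-1}S_{c}T$, so it suffices to show that $S_{c_i}^{\pm1}$ and $S_{c}^{\pm 1}$ with $c,c_i\in\mathrm{Im}\,\mathscr{C}$ preserve the R-class. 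The subtlety is that after applying $T^{-1}$, the coloring changes; we need $c$ to still be a color of that new coloring. Instead, we use Step 1 in the form: for any coloring $\mathscr{C}'$ and arc $a$, $\mathscr{C}'\sim\mathscr{C}'\cdot S_{\mathscr{C}'(a)}^{\pm1}$. Then $\mathscr{C}\cdot T^{-1}$ has color $c\cdot T^{-1}$ at the arc where $\mathscr{C}$ had $c$. Since $S_{c\cdot T^{-1}}=TS_cT^{-1}$, we can write $\mathscr{C}\cdot S_y$ as a composite of successive Step 1 moves, each using the current color of a fixed arc. Alternatively, we induct on the word length of $T$: $S_{c_i}$ is realized directly, and conjugation is absorbed by applying arc colors of the current coloring. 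We check this bookkeeping of right actions carefully to conclude.
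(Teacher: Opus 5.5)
Your argument is correct. Your Step 1 is the same move the paper uses at the end of its lemma: sweeping an arc colored $y$ once around $S^{2}$ to get $\mathscr{C}\cdot S^{\pm1}_{y}$, treated at the same level of detail.

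The two proofs differ in how an arbitrary $y\in\langle\mathrm{Im}\,\mathscr{C}\rangle$ is reached. The paper works geometrically. It writes $y$ as an iterated $\ast^{\pm1}$-product of colors of $\mathscr{C}$ and uses RII moves to create an auxiliary arc actually colored $y$. It then sweeps that arc and undoes the RII moves. The theorem follows by iterating over a word $S^{\varepsilon_1}_{y_1}\cdots S^{\varepsilon_m}_{y_m}$, using $\langle\mathrm{Im}\,\mathscr{C}\cdot S_k\rangle=\langle\mathrm{Im}\,\mathscr{C}\rangle$. That iteration needs equality, not just the inclusion you cite in your first paragraph; equality holds because $S_k$ restricts to an automorphism of the subquandle.

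You instead sweep only genuine arcs of $D$ and push everything else into group theory. Your idea of always using the current color of a fixed arc is exactly right. Written out: if $\mathscr{C}\sim\mathscr{C}\cdot g$ and $\mathscr{C}(a)=w$, then $(\mathscr{C}\cdot g)(a)=w\cdot g$, and sweeping $a$ gives
\[
(\mathscr{C}\cdot g)\cdot S^{\pm1}_{w\cdot g}=\mathscr{C}\cdot g\,g^{-1}S^{\pm1}_{w}\,g=\mathscr{C}\cdot S^{\pm1}_{w}\,g .
\]
So a word $g_{1}\cdots g_{N}$ in the generators $S^{\pm1}_{w}$, $w\in\mathrm{Im}\,\mathscr{C}$, is realized by sweeping the corresponding arcs in the reverse order $N,\dots,1$. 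Your identity $S_{c\cdot T}=T^{-1}S_{c}T$ then shows these words give all of $\mathrm{Inn}(\langle\mathrm{Im}\,\mathscr{C}\rangle)$.

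Your sentence about $\mathscr{C}\cdot T^{-1}$ actually computes this left-multiplication identity, $\mathscr{C}\cdot T^{-1}\cdot S_{c\cdot T^{-1}}=\mathscr{C}\cdot S_{c}\,T^{-1}$. It does not produce $\mathscr{C}\cdot S_{y}$ directly, so you should state the bookkeeping in the form above.

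As for trade-offs: your version needs only the one sweep move applied to arcs already present, and avoids the paper's auxiliary-arc construction entirely. The paper's version realizes each single generator $S_{y}$, $y\in\langle\mathrm{Im}\,\mathscr{C}\rangle$, by one sweep of one arc, which gives a more direct geometric picture.
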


To prove this theorem, we prepare the following lemma.

\begin{lemma}
 \label{lemma_of_main_theorem1}
 Let $X$ be a quandle, $D$ an oriented knot diagram, and $\mathscr{C}$ an $X$-coloring of $D$.
 For any $y \in \langle \mathrm{Im} \, \mathscr{C} \rangle$ and $\varepsilon \in \{ \pm 1 \}$, $\mathscr{C}$ is R-equivalent to $\mathscr{C} \cdot S^{\varepsilon}_{y}$.
\end{lemma}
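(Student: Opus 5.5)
The plan is to realize $\mathscr{C} \cdot S^{\varepsilon}_{y}$ geometrically: add a small loop colored $y$ and pass it over the whole diagram. First I treat the case $y \in \mathrm{Im}\,\mathscr{C}$, and then extend to all of $\langle \mathrm{Im}\,\mathscr{C} \rangle$ by induction on how $y$ is generated.

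\emph{Step 1: $y = \mathscr{C}(a)$ for some arc $a$.} Perform an RI move on $a$ to create a small kink; both new arcs have color $y$, since Q1 gives $y \ast y = y$. Next, pull a long strand of the kink (an RII finger move) over the rest of the diagram. Equivalently, view $D$ as the closure of a $1$-tangle and slide the kink along the knot. A cleaner version: the small loop bounds a disk, so it can be isotoped, passing only over the other strands, until it encircles the whole of $D$ minus a small neighborhood of the kink. Each arc of $D$ then passes under an overarc colored $y$, and all the moves involved are RII and RIII over-passes. The loop is then shrunk from the other side of $S^{2}$; since we work in the plane, I instead sweep the loop over the diagram, which is a sequence of RII and RIII moves. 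Every arc that the loop has swept past gets its color changed by $S^{\pm 1}_{y}$, the sign depending on the orientation of the kink. When the sweep is complete, the loop is again small near $a$. The portion of $a$ itself needs care: the kink arcs also get conjugated, but $S_{y}(y) = y$. Remove the kink by RI. The result is $\mathscr{C} \cdot S^{\varepsilon}_{y}$, and choosing the kink's crossing sign gives either $\varepsilon$.

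Checking the consistency of colors during the sweep is the main obstacle. In particular, I must verify that at the end every arc has been acted on exactly once by the same $S^{\varepsilon}_{y}$, including the arc carrying the kink. This holds because the coloring rule at RIII is exactly Q3, so the colors of the underlying diagram inside the loop are uniformly transformed.

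\emph{Step 2: general $y$.} Every element of $\langle \mathrm{Im}\,\mathscr{C} \rangle$ has the form $y = z \cdot S^{\varepsilon_{1}}_{w_{1}} \cdots S^{\varepsilon_{k}}_{w_{k}}$ with $z, w_{i} \in \mathrm{Im}\,\mathscr{C}$. I induct on $k$. Suppose $y = x \ast^{\delta} w$ where $w \in \mathrm{Im}\,\mathscr{C}$ and the claim holds for $x$. Then
\[
S_{y} = S^{-\delta}_{w} S_{x} S^{\delta}_{w},
\]
because $S_{x \ast w} = S^{-1}_{w} S_{x} S_{w}$ by Q3, with composition order matched to the right action. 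Since $\mathscr{C} \cdot S^{\pm 1}_{w}$ has the same image subquandle, and conjugating by an inner automorphism preserves $\langle \mathrm{Im}\rangle$ (because $w$ lies in it), I apply the step-1 and induction moves successively, each time to the current coloring. The point to check is that $x$ lies in $\langle \mathrm{Im}(\mathscr{C}\cdot S_{w}^{\pm1})\rangle$; this holds since that subquandle equals $\langle \mathrm{Im}\,\mathscr{C}\rangle$. Transitivity of R-equivalence then finishes the proof.
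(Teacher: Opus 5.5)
Your Step~1 is the paper's final move: an arc colored $y$ is swept once around $S^2$ over the rest of the diagram. Step~2, however, has a genuine gap, because the induction is not well founded. The parameter $k$ is the length of a word for $y$ over the image of the \emph{current} coloring, and your first move changes that image. Put $I=\mathrm{Im}\,\mathscr{C}$ and $I'=\mathrm{Im}(\mathscr{C}\cdot S_w^{-\delta})=I\cdot S_w^{-\delta}$, and note that $w\in I'$. Suppose $x=z\cdot S_{w_1}^{\varepsilon_1}\cdots S_{w_{k-1}}^{\varepsilon_{k-1}}$ with $z,w_i\in I$. Write $z=z'\cdot S_w^{\delta}$ and $w_i=w_i'\cdot S_w^{\delta}$ with $z',w_i'\in I'$. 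Using $S_{u\cdot S_w^{\delta}}=S_w^{-\delta}S_uS_w^{\delta}$, you get
\[
x=\bigl(z'\cdot S_{w_1'}^{\varepsilon_1}\cdots S_{w_{k-1}'}^{\varepsilon_{k-1}}\bigr)\ast^{\delta}w .
\]
This is a word of length $k$ over $I'$, with exactly the shape $y$ had over $I$. So the inductive hypothesis does not cover $x$ relative to $\mathscr{C}\cdot S_w^{-\delta}$. Checking only that $x\in\langle\mathrm{Im}(\mathscr{C}\cdot S_w^{-\delta})\rangle$ amounts to assuming the lemma itself. Your third move has the same defect. Passing from $\mathscr{E}=\mathscr{C}\cdot S_w^{-\delta}S_x$ to $\mathscr{E}\cdot S_w^{\delta}$ by Step~1 requires $w\in\mathrm{Im}\,\mathscr{E}$, but only $w\in\langle\mathrm{Im}\,\mathscr{E}\rangle$ is guaranteed.

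The paper avoids this by never changing the coloring before its single sweep. It writes $y=(\cdots(x_{i_0}\ast^{\varepsilon_1}x_{i_1})\cdots)\ast^{\varepsilon_m}x_{i_m}$ and uses only RII moves: it drags a finger of an arc colored $y_{k-1}$ over the diagram and then under an arc colored $x_{i_k}$. This creates an arc colored $y_k$, and all original colors persist. In the resulting diagram $D_m$ some arc actually carries $y$, and one sweep gives $\mathscr{C}_m\cdot S_y^{\varepsilon}$. Reversing the RII moves, whose induced bijections commute with post-composition by automorphisms, then yields $\mathscr{C}\cdot S_y^{\varepsilon}$ on $D$.

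Your algebraic route can also be repaired with a correctly quantified induction. Show, for \emph{every} coloring $\mathscr{C}'$ and all $a_i\in\mathrm{Im}\,\mathscr{C}'$, that $\mathscr{C}'$ is R-equivalent to $\mathscr{C}'\cdot S_{a_1}^{\varepsilon_1}\cdots S_{a_\ell}^{\varepsilon_\ell}$, by induction on $\ell$ (the case $\ell=1$ is Step~1). Apply the last letter first, via
\[
S_{a_1}^{\varepsilon_1}\cdots S_{a_\ell}^{\varepsilon_\ell}
=S_{a_\ell}^{\varepsilon_\ell}\,S_{a_1\ast^{\varepsilon_\ell}a_\ell}^{\varepsilon_1}\cdots S_{a_{\ell-1}\ast^{\varepsilon_\ell}a_\ell}^{\varepsilon_{\ell-1}},
\qquad a_i\ast^{\varepsilon_\ell}a_\ell\in\mathrm{Im}(\mathscr{C}'\cdot S_{a_\ell}^{\varepsilon_\ell}).
\]
Then note that $S_y^{\varepsilon}=g^{-1}S_z^{\varepsilon}g$, with $g=S_{w_1}^{\varepsilon_1}\cdots S_{w_k}^{\varepsilon_k}$, is such a word. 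This derives the lemma, and even Theorem~\ref{main_theorem1}, purely algebraically from the case $y\in\mathrm{Im}\,\mathscr{C}$.

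Two smaller points about Step~1. First, do not retreat to the plane. Sweeping the enlarged loop back over the diagram restores the original coloring, so the return must go across the back of $S^2$, as in the paper. (In $\mathbb{R}^2$ it would have to come back \emph{under} the diagram.) Second, the exponent is fixed by the direction of the sweep, that is, the rotation sense of the loop, not by the crossing sign of the kink.
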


\begin{proof}
 Suppose that $\mathrm{Im} \, \mathscr{C} = \{ x_{1}, x_{2}, \cdots, x_{n} \}$.
 Then $y$ can be written as $y = ( \cdots (( x_{i_{0}} \ast^{\varepsilon_{1}} x_{i_{1}} ) \ast^{\varepsilon_{2}} x_{i_{2}}) \cdots ) \ast^{\varepsilon_{m}} x_{i_{m}}$, where  each $i_{k}$ is an integer with $1 \leq i_{k} \leq n$ and $\varepsilon_{k} \in \{ \pm 1 \}$ (see \cite[Lemma 8.5.5]{K2017}).
 
 For each integer $k$ with $0 \leq k \leq m$, let $y_{k} = ( \cdots (( x_{i_{0}} \ast^{\varepsilon_{1}} x_{i_{1}} ) \ast^{\varepsilon_{2}} x_{i_{2}}) \cdots ) \ast^{\varepsilon_{k}} x_{i_{k}}$.
 Suppose that $D_{0} = D$ and $\mathscr{C}_{0} = \mathscr{C}$.
 We note that $D_{0}$ with $\mathscr{C}_{0}$ has arcs colored $y_{0} = x_{i_{0}}$ and $x_{i_{1}}$.
 For each $k$ with $1 \leq k \leq m$, we construct a diagram $D_{k}$ with an $X$-coloring $\mathscr{C}_{k}$ inductively as follows.
 Consider the diagram $D_{k-1}$ with $\mathscr{C}_{k-1}$.
 By  a finite sequence of RII, pull a part of an arc colored $y_{k-1}$ over other arcs of $D_{k-1}$, until the diagram locally looks like the one shown in the left-hand side of Figure \ref{deformations_of_lemma_3-2} (a) or (b).
 By applying one of the deformations depicted in Figure \ref{deformations_of_lemma_3-2}, we obtain $D_{k}$ with $\mathscr{C}_{k}$ having an arc colored $y_{k} = y_{k-1} \ast^{\varepsilon_{k}} x_{i_{k}}$.
 We note by construction that $D_{k}$ with $\mathscr{C}_{k}$ has arcs colored $x_{1}, x_{2}, \cdots, x_{n}$.
 
 Consider the diagram $D_{m}$ with $\mathscr{C}_{m}$.
 By a finite sequence of Reidemeister moves, sweep a part of an arc colored $y = y_{m}$ once around $S^{2}$, passing it over other arcs of $D_{m}$, until it returns to its original position.
 This operation sends $\mathscr{C}_{m}$ to $\mathscr{C}_{m} \cdot S^{\pm 1}_{y}$, where the sign depends on the direction of the sweep.
 Therefore, we obtain $D_{m}$ with $\mathscr{C}_{m} \cdot S^{\varepsilon}_{y}$.
 By applying the above sequence of RII taking $D_{0}$ to $D_{m}$ in reverse order, we obtain $D = D_{0}$ with $\mathscr{C} \cdot S^{\varepsilon}_{y}$.
\begin{figure}[htbp]
 \centering
 \includegraphics[scale=0.5]{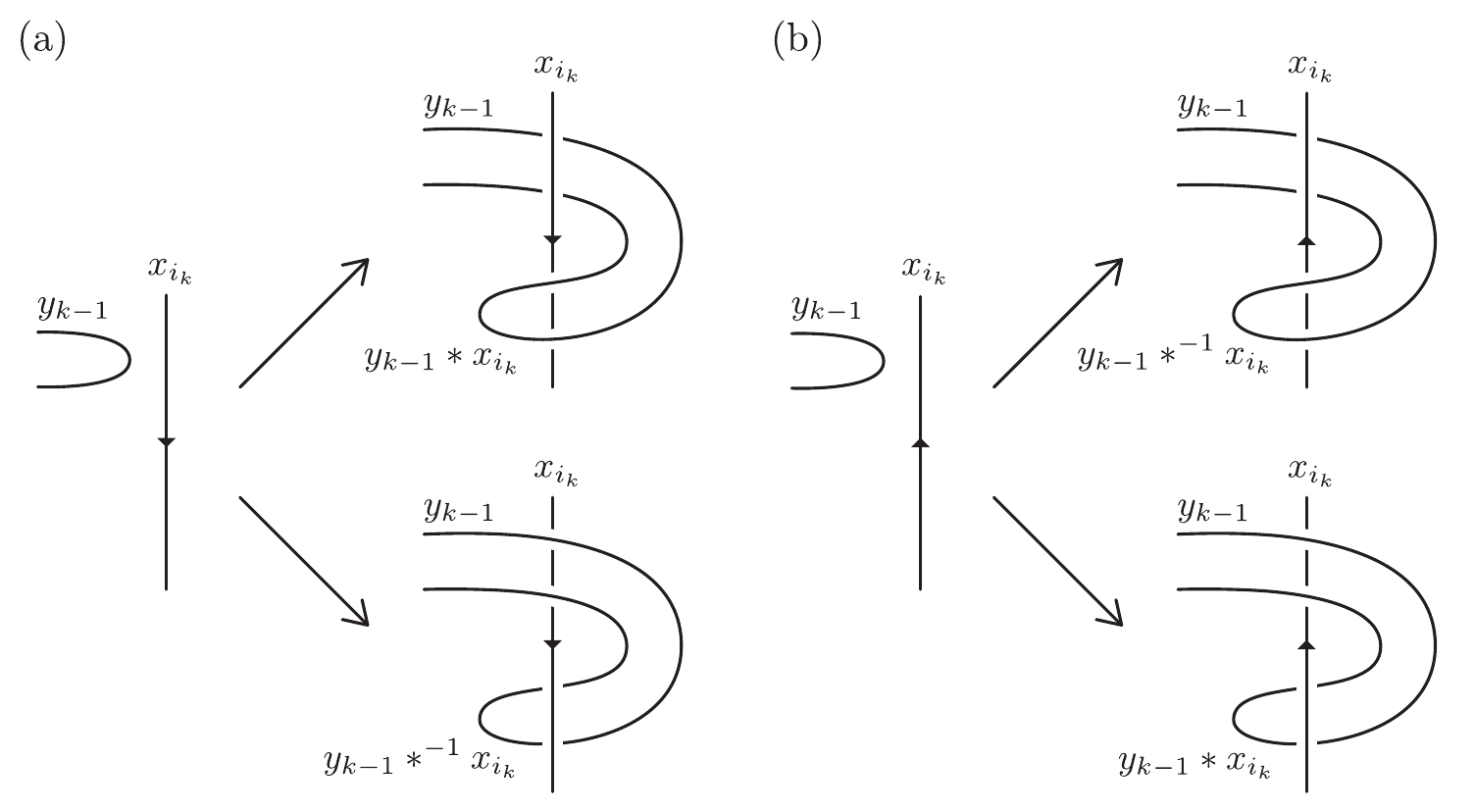}
 \caption{Deformations yielding arcs colored $y_{k} = y_{k-1} \ast^{\varepsilon_{k}} x_{i_{k}}$.}
 \label{deformations_of_lemma_3-2}
\end{figure}
\end{proof}

\begin{proof}[Proof of Theorem \ref{main_theorem1}]
 We may assume that $S = S^{\varepsilon_{1}}_{y_{1}} \cdots S^{\varepsilon_{m}}_{y_{m}}$ with $y_{k} \in \langle \mathrm{Im} \, \mathscr{C} \rangle$ and $\varepsilon_{k} \in \{ \pm 1 \}$.
 For each integer $k$ with $1 \leq k \leq m$, let $S_{k} = S^{\varepsilon_{1}}_{y_{1}} \cdots S^{\varepsilon_{k}}_{y_{k}}$.
 By Lemma \ref{lemma_of_main_theorem1}, $\mathscr{C}$ is R-equivalent to $\mathscr{C} \cdot S_{1}$.
 Assume that $\mathscr{C}$ is R-equivalent to $\mathscr{C} \cdot S_{k}$ ($1 \leq k \leq m - 1$).
 Since $S_{k}$ is an automorphism of $\langle \mathrm{Im} \, \mathscr{C} \rangle$, $\langle \mathrm{Im} \, \mathscr{C} \rangle$ is also generated by $S_{k}(\mathrm{Im} \, \mathscr{C}) = \mathrm{Im} \, \mathscr{C} \cdot S_{k}$.
 It follows that $\langle \mathrm{Im} \, \mathscr{C} \rangle = \langle \mathrm{Im} \, \mathscr{C} \cdot S_{k} \rangle$, and hence $y_{k+1} \in \langle \mathrm{Im} \, \mathscr{C} \cdot S_{k} \rangle$.
 Since $\mathscr{C} \cdot S_{k+1} = (\mathscr{C} \cdot S_{k}) \cdot S^{\varepsilon_{k+1}}_{y_{k+1}}$, $\mathscr{C} \cdot S_{k}$ is R-equivalent to $\mathscr{C} \cdot S_{k+1}$ by Lemma \ref{lemma_of_main_theorem1}.
 Thus $\mathscr{C}$ is R-equivalent to $\mathscr{C} \cdot S_{k+1}$.
\end{proof}

\newpage

\section{R-equivalence classes of colorings of a $2$-bridge knot by \\ a dihedral quandle}
\label{sec:application}

For each positive integer $q \geq 3$, the cyclic group $\mathbb{Z} / q \mathbb{Z}$ equipped with a binary operation $\ast$ given by 
\[
x \ast y = 2y - x
\]
becomes a quandle.
We call it the \emph{dihedral quandle} of order $q$, and write it as $R_{q}$.
It is routine to check that $R_{q}$ is connected if $q$ is odd.

Let $\alpha$ and $\beta$ be coprime odd integers with $\alpha$ positive and $- \alpha < \beta < \alpha$, and $S(\alpha, \beta)$ the $2$-bridge knot in Schubert's normal form (see \cite{K1996}).
Suppose that $D$ is the standard diagram of $S(\alpha, \beta)$, and $u_{1}$ and $u_{2}$ are the two upper-bridges of $D$.
Given an odd prime integer $p$, let us consider $R_{p}$-colorings of $D$.
As mentioned in \cite{I2005}, $D$ has non-trivial $R_{p}$-colorings if and only if there exists $k \in \mathbb{Z}$ such that $\alpha = k p$.
Furthermore, when $\alpha = k p$, each $R_{p}$-coloring of $D$ is uniquely determined by the colors $a$ and $b$ assigned to $u_{1}$ and $u_{2}$, respectively.
We denote such a coloring by $\mathscr{C}_{(a, b)}$.
We note that $\mathscr{C}_{(a, b)}$ is non-trivial if and only if $a \neq b$.

The aim of this section is to establish the following theorem, utilizing Theorem \ref{main_theorem1}.

\begin{theorem}
 \label{application_of_main_theorem}
 Let $p$ be an odd prime integer and $\alpha = k p$ \textup{(}$k \in \mathbb{Z}$\textup{)}.
 Suppose that $\gcd(k, p) = 1$ if $p \geq 5$.
 Then any non-trivial $R_{p}$-coloring of $D$ is R-equivalent to exactly one of $\mathscr{C}_{(0, 1)}, \mathscr{C}_{(0, 2)}, \cdots, \mathscr{C}_{(0, \frac{p-1}{2})}$.
\end{theorem}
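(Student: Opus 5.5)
Two things have to be shown: every non-trivial coloring is R-equivalent to some $\mathscr{C}_{(0,d)}$ with $1 \leq d \leq \frac{p-1}{2}$, and no two of these listed colorings are R-equivalent. Existence will come from Theorem~\ref{main_theorem1}. Distinctness needs an invariant of R-equivalence.

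\emph{Existence.} Let $\mathscr{C}_{(a,b)}$ be non-trivial, so $a \neq b$. Because $p$ is prime, the classes $a$ and $b$ already generate $R_{p}$ as a quandle: the elements $a \ast b = 2b-a$, $b \ast (2b-a) = 3b-2a$, and so on, run through the progression $a + j(b-a)$, which covers all of $\mathbb{Z}/p\mathbb{Z}$. Hence $\langle \mathrm{Im}\,\mathscr{C}_{(a,b)} \rangle = R_{p}$, and Theorem~\ref{main_theorem1} lets us act by any element of $\mathrm{Inn}(R_{p})$.

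This group consists of the maps $x \mapsto \pm x + 2c$, which are all the maps $x \mapsto \pm x + t$ since $2$ is invertible mod $p$. Such a map sends $\mathscr{C}_{(a,b)}$ to $\mathscr{C}_{(\pm a + t,\, \pm b + t)}$, because colorings are determined by the colors of the bridges $u_{1}$ and $u_{2}$. Translating by $t = -a$ gives $\mathscr{C}_{(0,\, b-a)}$. If $b - a > \frac{p-1}{2}$, the reflection $x \mapsto -x$ replaces it with $d = a - b$. So every non-trivial coloring is R-equivalent to some $\mathscr{C}_{(0,d)}$ with $1 \leq d \leq \frac{p-1}{2}$.

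\emph{Distinctness.} This is the main obstacle, and the hypothesis $\gcd(k,p)=1$ must enter here. We need an invariant of R-equivalence that separates $d$ from $\pm d'$ when $d' \neq \pm d$; the invariant must be unchanged by inner automorphisms, since those are realized by R-moves.

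\begin{itemize}
\item[(i)] The case $p = 3$ is trivial, since there is only one class, $d = 1$.
\item[(ii)] For $p \geq 5$, the natural candidate is a quadratic-type quantity, for instance a quandle cocycle invariant. I would try the Mochizuki 3-cocycle on $R_{p}$ evaluated on the shadow coloring, or a simpler linking-type quantity. Such a quantity is invariant under Reidemeister moves when evaluated on a fixed coloring together with region colors, so it is an R-equivalence invariant.
\item[(iii)] For $S(\alpha,\beta)$ with $\alpha = kp$, I expect the value on $\mathscr{C}_{(0,d)}$ to be $c\, d^{2}$ times a constant $c$ depending on $k$, $\beta$ and $p$. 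The hypothesis $\gcd(k,p)=1$ should make $c$ nonzero mod $p$.
\end{itemize}

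The invariant is a genuine value, and not just a sum over all colorings, because R-equivalence is tracked through the induced bijection on colorings. Hence each individual coloring carries its own value, for example via a homology class of the coloring in $H_{3}^{Q}(R_{p})$, which is nonzero under these hypotheses. Then $d^{2} \equiv d'^{2}$ forces $d' \equiv \pm d$, hence $d = d'$ in the range $1, \dots, \frac{p-1}{2}$.

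The computation in (iii), identifying this invariant as $d^{2}$ times a unit, is where I expect the real work. Concretely, I would carry it out by computing the fundamental class image on the standard diagram with bridge colors $0$ and $d$, which scales by $d^{2}$ because the Mochizuki cocycle is homogeneous of degree $2$ after normalization.
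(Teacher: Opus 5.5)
Your existence half is complete and is essentially the paper's argument. Two distinct colors generate $R_{p}$, so Theorem~\ref{main_theorem1} lets you apply every element of $\mathrm{Inn}(R_{p})=\{x\mapsto \pm x+t\}$, and this moves $\mathscr{C}_{(a,b)}$ to some $\mathscr{C}_{(0,d)}$ with $1\le d\le \frac{p-1}{2}$.

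\textbf{Main gap.} The gap is in distinctness, which is the only place $\gcd(k,p)=1$ is used, and there you state an expectation rather than a proof. Your invariant, the weight associated with Mochizuki's cocycle $\theta_{p}$, is the right one. The $d^{2}$-scaling can also be justified. The map $\mu_{d}(x)=dx$ is a quandle homomorphism of $R_{p}$, so $(\mu_{d}\circ\mathscr{C}_{(0,1)},\mu_{d}\circ\mathscr{R})$ is a shadow coloring for $\mathscr{C}_{(0,d)}$. With integer lifts, $\theta_{p}(dx,dy,dz)=d^{p+1}\theta_{p}(x,y,z)\equiv d^{2}\theta_{p}(x,y,z) \pmod p$. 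This gives only $W(\mathscr{C}_{(0,d)})=d^{2}c$ with $c=W(\mathscr{C}_{(0,1)})$. The whole content of distinctness is that $c\not\equiv 0 \pmod p$, and that requires actually computing the weight on the standard diagram of $S(\alpha,\beta)$, which you never do. Your remark that the homology class is ``nonzero under these hypotheses'' is the same missing fact, asserted.

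The paper supplies it via Iwakiri's formula $W_{\theta_{p}}(D,\mathscr{C}_{(a,b)},\mathscr{R})=kn(a-b)(b-a)$, where $n$ is the odd integer with $n\beta\equiv 1 \pmod{2\alpha}$. Hence $c=-kn$. This is a unit mod $p$ because $\gcd(k,p)=1$, and because $n$ is invertible modulo $2\alpha$, hence modulo $p$ since $p\mid\alpha$. So $\gcd(n,p)=1$ is needed as well, not just $\gcd(k,p)=1$.

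\textbf{Smaller gap.} Reidemeister invariance of the weight is a statement about the pair $(\mathscr{C},\mathscr{R})$. A sequence of moves realizing an R-equivalence sends $(\mathscr{C},\mathscr{R})$ to $(\mathscr{C}',\mathscr{R}'')$ for a region coloring $\mathscr{R}''$ you do not control. To get an invariant of the arc coloring alone, you need the weight to be independent of the region coloring. The paper obtains this in Theorem~\ref{invariant_of_R-equivalence} from connectedness of $R_{p}$ and the Inoue--Kabaya theorem. It is also visible directly because Iwakiri's formula holds for every $\mathscr{R}$.

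With both ingredients in place, your concluding step ($d^{2}\equiv d'^{2}$ forces $d=d'$ in the range $1,\dots,\frac{p-1}{2}$) is exactly the paper's.
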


To prove this theorem, we will use weights of shadow colorings defined using a quandle 3-cocycle.
For this purpose, we first review these notions briefly.
More details can be found in \cite{K2017}.

Let $X$ be a quandle, $D$ an oriented knot diagram, and $\mathscr{C}$ an $X$-coloring of $D$.
A map $\mathscr{R} : \{ \text{all regions of }D \} \to X$ is called a \emph{region coloring} of $D$ with respect to $\mathscr{C}$ if $\mathscr{R}$ satisfies the condition depicted in Figure \ref{region_coloring_condition} around each arc of $D$.
In the figure, $y$ denotes the color of the arc by $\mathscr{C}$, and $x$ and $x \ast y$ denote the elements of $X$ assigned to the corresponding regions by $\mathscr{R}$.
We call $x$ and $x \ast y$ the \emph{colors} of the regions.
The pair $(\mathscr{C}, \mathscr{R})$ is called a \emph{shadow coloring} of $D$ by $X$.
\begin{figure}[htbp]
 \centering
 \includegraphics[scale=0.5]{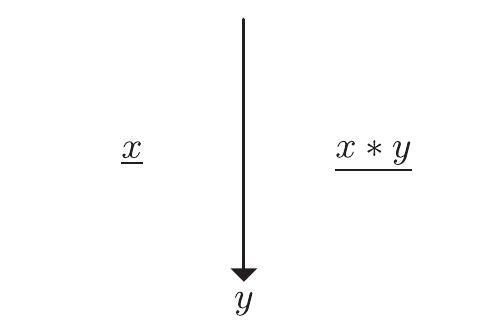}
 \caption{The condition for a region coloring.}
 \label{region_coloring_condition}
\end{figure}

Let $A$ be an abelian group.
A map $f : X^{3} \to A$ is said to be a \emph{quandle $3$-cocycle} of $X$ if $f$ satisfies the following two conditions.
\begin{itemize}
 \item[QC1.] For any $x, y, z, w \in X$,
 \begin{align*}
 & \hspace{1.4em} f(x, y, z) + f(x \ast z, y \ast z, w) + f(x, z, w) \\
 & = f(x \ast y, z, w) + f(x, y, w) + f(x \ast w, y \ast w, z \ast w).
 \end{align*}
 \item[QC2.] For any $x, y \in X$, $f(x, x, y) = f(x, y, y) = 0$.
\end{itemize}

Let $(\mathscr{C}, \mathscr{R})$ be a shadow coloring of $D$ by $X$, and $f : X^{3} \to A$ a quandle $3$-cocycle.
For each crossing $c$ of $D$, whose arcs and regions are colored by $(\mathscr{C}, \mathscr{R})$ as depicted in Figure \ref{local_weight}, we define the \emph{local weight} $W_{f}(c, \mathscr{C}, \mathscr{R})$ of $(\mathscr{C}, \mathscr{R})$ at $c$ by
\[
 W_{f}(c, \mathscr{C}, \mathscr{R}) = \varepsilon \cdot f(x, y, z),
\]
where $\varepsilon = + 1$ if $c$ is positive, otherwise $\varepsilon = - 1$.
Take the sum 
\[
 W_{f}(D, \mathscr{C}, \mathscr{R}) = \sum_{c} W_{f}(c, \mathscr{C}, \mathscr{R})
\]
of the local weights over all crossings of $D$, and call it the \emph{weight} of $(\mathscr{C}, \mathscr{R})$ associated with $f$.
\begin{figure}[htbp]
 \centering
 \includegraphics[scale=0.5]{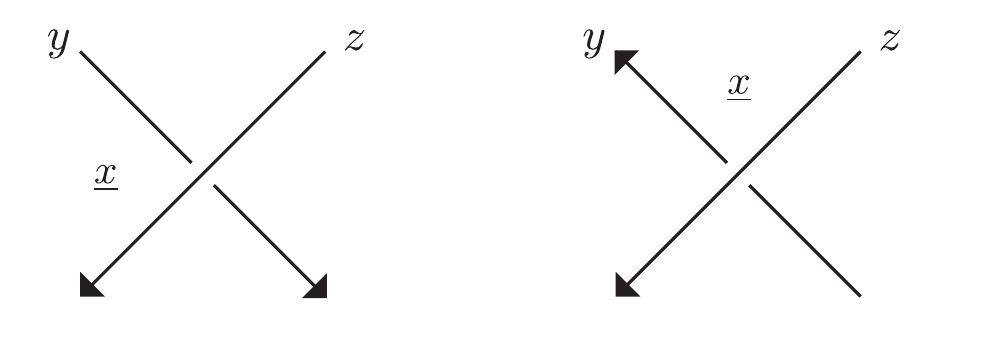}
 \caption{Colors of arcs and regions around a crossing $c$.}
 \label{local_weight}
\end{figure}

The following theorem plays a key role in the proof of Theorem \ref{application_of_main_theorem}.

\begin{theorem}
 \label{invariant_of_R-equivalence}
 Let $X$ be a connected quandle and $f$ a quandle $3$-cocycle of $X$.
 Suppose that $X$-colorings $\mathscr{C}$ and $\mathscr{C}^{\prime}$ of an oriented knot diagram $D$ are R-equivalent to each other.
 Then, for any region colorings $\mathscr{R}$ and $\mathscr{R}^{\prime}$ of $D$ with respect to $\mathscr{C}$ and $\mathscr{C}^{\prime}$, respectively, $W_{f}(D, \mathscr{C}, \mathscr{R}) = W_{f}(D, \mathscr{C}^{\prime}, \mathscr{R}^{\prime})$.
 That is, $W_{f}(D, \mathscr{C}, \mathscr{R})$ is an invariant of R-equivalence.
\end{theorem}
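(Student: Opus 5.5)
The proof splits into two independent facts, and the claim follows by combining them. First, for a fixed $X$-coloring $\mathscr{C}$ of $D$, the weight $W_{f}(D, \mathscr{C}, \mathscr{R})$ does not depend on the choice of region coloring $\mathscr{R}$. Second, a single Reidemeister move or planar isotopy taking $D$ to $D'$, together with an induced pair $(\mathscr{C},\mathscr{R}) \mapsto (\mathscr{C}',\mathscr{R}')$ of shadow colorings, preserves the weight. Given these, let a sequence of moves take $(D,\mathscr{C})$ back to $(D,\mathscr{C}')$. Extend a region coloring $\mathscr{R}$ step by step along the moves; the region colors are forced on unchanged regions and consistent on the new ones. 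This produces some region coloring $\mathscr{R}''$ of $D$ with respect to $\mathscr{C}'$ satisfying $W_{f}(D,\mathscr{C},\mathscr{R}) = W_{f}(D,\mathscr{C}',\mathscr{R}'')$. By the first fact this equals $W_{f}(D,\mathscr{C}',\mathscr{R}')$ for the arbitrary $\mathscr{R}'$.

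The second fact is the standard computation behind the Carter--Jelsovsky--Kamada--Langford--Saito cocycle invariant. Under RI, the new crossing has local weight $\pm f(x,y,y)=0$ by QC2. Under RII, the two new crossings have opposite signs and identical triples $(x,y,z)$, so their weights cancel. Under RIII, the six local weights before and after differ exactly by the 3-cocycle identity QC1, once the region color of the relevant corner is chosen as $x$ and the three arcs are colored $y,z,w$. Planar isotopies change nothing. I would present this by pointing to the literature (\cite{K2017}) rather than redrawing every orientation case, noting that oriented RIII cases reduce to a single one via RII.

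The main obstacle is the first fact, and this is where connectedness of $X$ enters. A region coloring is determined by the color of one fixed region, say the unbounded one, and any element of $X$ can be chosen there. So every other region coloring has the form $\mathscr{R}_{x_0}$, obtained by changing that base color from $x_0$ to some $x_1$. Since $X$ is connected, $x_1 = x_0 \cdot S^{\varepsilon_1}_{z_1}\cdots S^{\varepsilon_m}_{z_m}$, so it suffices to treat $x_1 = x_0 \ast^{\pm1} z$ for a single $z\in X$. To realize this diagrammatically, I would add a small unknotted circle component colored $z$ far away, and sweep it over the whole diagram on $S^{2}$ using RII and RIII moves. Since $z$ is an arbitrary element rather than an arc color, this circle is an auxiliary component, and the weight computations of the second fact apply equally to links.

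The circle starts and ends as a split trivial component. Its own contributions vanish: with no crossings at the start and end, the accounting is clean. The RII/RIII invariance from the second fact shows that the total weight is unchanged throughout the sweep. After the circle has passed over everything, the arcs of $D$ keep their colors, since passing under a circle and coming back out conjugates twice and cancels. The region colors inside, however, are shifted by $\ast^{\pm1} z$. So the weight with base color $x_0$ equals the weight with base color $x_0\ast^{\pm1}z$.

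An alternative I would fall back on is purely algebraic: show $W_{f}(D,\mathscr{C},\mathscr{R}\cdot S_z)$, obtained by applying $S_z$ to region colors only, differs from the original by a telescoping sum of QC1 terms. This requires care, because $S_z$ acts on the regions while the arcs keep their colors. I expect checking that the region colors transform as claimed under the sweep to be the delicate point.
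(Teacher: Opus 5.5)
Your decomposition is the paper's. First you carry the weight along the moves to some $(\mathscr{C}',\mathscr{R}'')$, citing \cite{K2017}, as the paper does. Then you use that, for connected $X$, the weight does not depend on the region coloring. The paper simply imports this second fact from \cite[Theorem 3]{IK2014}. The gap is in your own proof of it.

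If the circle colored $z$ passes \emph{over} $D$, its color stays $z$ throughout. Arc colors of $D$ and region colors then change by the same rule: a point of an arc, or a point of a region, gets $S_z^{\pm 1}$ applied each time it changes sides of the oriented circle, with the sign determined only by the direction. There are two cases.
\begin{itemize}
\item If $D$ ends on the same side of the circle where it started (your ``coming back out''), every arc point and every region point adjacent to $D$ has crossed net zero times. The whole shadow coloring on $D$ is then unchanged.
\item If $D$ ends on the other side, every arc color and every region color is hit by the same $S_z^{\pm1}$. This is what happens when the circle expands to enclose $D$, or sweeps across $S^2$ and shrinks on the far side, where the small disk now lies on the opposite side of the oriented circle.
\end{itemize}
So you only obtain $W_f(D,\mathscr{C},\mathscr{R})=W_f(D,\mathscr{C}\cdot S_z^{\pm1},S_z^{\pm1}\circ\mathscr{R})$ or a tautology. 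You never get a different region coloring for the same $\mathscr{C}$.

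In fact the end state you describe, with arcs still colored by $\mathscr{C}$ and regions colored by $S_z^{\pm1}\circ\mathscr{R}$, is generally not a shadow coloring at all. The map $S_z\circ\mathscr{R}$ satisfies the region condition with respect to $\mathscr{C}\cdot S_z$, not $\mathscr{C}$; in $R_p$ it fails at every arc whose color differs from $z$. Your algebraic fallback is built on the same object, so it has the same defect.

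The repair is to let the circle pass \emph{under} $D$.
\begin{itemize}
\item $D$ is then the over-strand at every new crossing, so $\mathscr{C}$ is never altered. The circle's own pieces get recolored, which does no harm.
\item Fix a point $N$ inside the initial small circle, which sits in the unbounded region of color $x_0$. The color of $N$ is $x_0\ast^{\pm1}z$, with the sign chosen by orienting the circle. No strand ever passes over $N$ afterwards, so this color persists.
\item Expand the circle beneath $D$ by RII and RIII moves and planar isotopies until it encloses $D$ with no crossings.
\item Now $N$ lies in the outer region of $D$. Hence $D$ carries $(\mathscr{C},\mathscr{R}')$, where $\mathscr{R}'$ is the region coloring whose unbounded region has color $x_0\ast^{\pm1}z$.
\end{itemize}
The circle has no crossings at either end, so weight invariance for link diagrams gives $W_f(D,\mathscr{C},\mathscr{R})=W_f(D,\mathscr{C},\mathscr{R}')$. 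Connectedness then finishes the argument as you planned. Alternatively, simply cite \cite[Theorem 3]{IK2014} as the paper does.
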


\begin{proof}
 Take a finite sequence of Reidemeister moves and planar isotopies sending $\mathscr{C}$ to $\mathscr{C}^{\prime}$.
 Then, as mentioned in \cite{K2017}, the sequence sends $(\mathscr{C}, \mathscr{R})$ to a certain shadow coloring $(\mathscr{C}^{\prime}, \mathscr{R}^{\prime \prime})$, and we have $W_{f}(D, \mathscr{C}, \mathscr{R}) = W_{f}(D, \mathscr{C}^{\prime}, \mathscr{R}^{\prime \prime})$.
 Since $X$ is connected, $W_{f}(D, \mathscr{C}^{\prime}, \mathscr{R}^{\prime \prime}) = W_{f}(D, \mathscr{C}^{\prime}, \mathscr{R}^{\prime})$ by \cite[Theorem 3]{IK2014}.
 Thus, we have the claim.
\end{proof}

We next study $R_{p}$-colorings, where $p$ is an odd prime.

\begin{proposition}
 \label{r-equivalent_r_p-colorings}
 Let $p$ be an odd prime integer, $D$ an oriented knot diagram, and $\mathscr{C}$ a non-trivial $R_{p}$-coloring of $D$.
 For any $S \in \mathrm{Inn}(R_{p})$, $\mathscr{C}$ is R-equivalent to $\mathscr{C} \cdot S$.
\end{proposition}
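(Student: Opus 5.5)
The plan is to reduce this to Theorem \ref{main_theorem1} by showing that $\langle \mathrm{Im} \, \mathscr{C} \rangle = R_{p}$ whenever $\mathscr{C}$ is non-trivial. Then $\mathrm{Inn}(\langle \mathrm{Im} \, \mathscr{C} \rangle) = \mathrm{Inn}(R_{p})$, and the statement follows at once.

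Since $\mathscr{C}$ is non-trivial, its image contains two distinct elements $a \neq b$ of $\mathbb{Z}/p\mathbb{Z}$. The subquandle $\langle \{a, b\} \rangle$ is closed under $x \ast y = 2y - x$ and under $x \ast^{-1} y$; in $R_{p}$ these coincide, since $S_{y}$ is an involution. Starting from $a$ and $b$, we get $b \ast a = 2a - b = a - (b-a)$ and $a \ast b = b + (b-a)$. Set $d = b - a$. An induction shows that every element $a + jd$ with $j \in \mathbb{Z}$ lies in $\langle \{a,b\} \rangle$. Concretely, given $a + jd$ and $a + (j+1)d$, we have
\[
(a+jd) \ast (a+(j+1)d) = a + (j+2)d,
\]
and symmetrically in the negative direction. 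Because $p$ is prime and $d \neq 0$ in $\mathbb{Z}/p\mathbb{Z}$, $d$ is a unit, so $\{a + jd\}_{j}$ exhausts $\mathbb{Z}/p\mathbb{Z}$. Hence
\[
R_{p} = \langle \{a,b\} \rangle \subseteq \langle \mathrm{Im} \, \mathscr{C} \rangle \subseteq R_{p}.
\]

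With $\langle \mathrm{Im} \, \mathscr{C} \rangle = R_{p}$ as a subquandle, the inner automorphism groups agree literally. Any $S \in \mathrm{Inn}(R_{p})$ therefore lies in $\mathrm{Inn}(\langle \mathrm{Im} \, \mathscr{C} \rangle)$, and Theorem \ref{main_theorem1} gives that $\mathscr{C}$ is R-equivalent to $\mathscr{C} \cdot S$.

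The main step is the generation claim. Its only subtlety is that primality of $p$ is exactly what makes $d$ invertible. For composite $q$, the same argument would only produce the coset $a + d\mathbb{Z}/q\mathbb{Z}$, so the hypothesis that $p$ is prime is essential. Everything else is a direct citation of Theorem \ref{main_theorem1}.
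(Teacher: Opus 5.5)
Your proposal is correct and matches the paper's proof: both show that $\langle \mathrm{Im}\,\mathscr{C}\rangle = R_{p}$ for a non-trivial coloring and then apply Theorem \ref{main_theorem1}. The only difference is that you prove the generation claim yourself, using the arithmetic progression $a + jd$ and the invertibility of $d$ modulo $p$, whereas the paper cites it from the literature.
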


\begin{proof}
 Since $\mathscr{C}$ is non-trivial, $\mathrm{Im} \, \mathscr{C}$ includes at least two distinct elements of $R_{p}$, and hence $\langle \mathrm{Im} \, \mathscr{C} \rangle = R_{p}$ by \cite{FNT2011}.
 Thus, we have the claim by Theorem \ref{main_theorem1}. 
\end{proof}

Mochizuki \cite{M2003} gave the quandle $3$-cocycle $\theta_{p} : {R_{p}}^{3} \to \mathbb{Z} / p \mathbb{Z}$ defined by 
\[
\theta_{p}(x, y, z) = (x - y)\frac{(2z - y)^{p}+y^{p}-2z^{p}}{p}.
\]
We note that this formula was reformulated by Asami and Satoh \cite{AS2005}.
Let $\mathscr{R}$ be a region coloring of the standard diagram $D$ of $S(\alpha, \beta)$ with respect to $\mathscr{C}_{(a, b)}$.
Iwakiri \cite{I2005} gave the following formula:
\begin{align}
W_{\theta_{p}}(D, \mathscr{C}_{(a, b)}, \mathscr{R})
= \frac{\alpha}{p} n (a-b)(b-a)
= k n (a-b)(b-a), \label{eq:1}
\end{align}
where $n$ is the odd integer satisfying $1 \leq n \leq 2 \alpha - 1$ and $n \beta \equiv 1 \, (\hspace{-1.5ex} \mod 2 \alpha)$.
Since $R_{p}$ is connected, this value is an invariant of R-equivalence by Theorem \ref{invariant_of_R-equivalence}.
We write $W_{\theta_{p}}(D, \mathscr{C}_{(a, b)}, \mathscr{R})$ simply as $W_{(a, b)}$.

We are now ready to prove Theorem \ref{application_of_main_theorem}.

\begin{proof}[Proof of Theorem \ref{application_of_main_theorem}]
 For any integers $i$ and $j$ with $1 \leq i, j \leq p-1$, we have
 \[
 \mathscr{C}_{(i, i - j)} =
  \begin{cases}
  \mathscr{C}_{(0, j)} \cdot S_{\frac{i}{2}} & \text{if $i$ is even}, \\
  \mathscr{C}_{(0, j)} \cdot S_{\frac{i + p}{2}} & \text{if $i$ is odd}.
  \end{cases}
 \]
 Thus, $\mathscr{C}_{(0, j)}$ is R-equivalent to $\mathscr{C}_{(i, i - j)}$ by Proposition \ref{r-equivalent_r_p-colorings}.
 Similarly, for any integer $j$ with $1 \leq j \leq p-1$, $\mathscr{C}_{(0, j)}$ is R-equivalent to $\mathscr{C}_{(0, -j)}$ since $\mathscr{C}_{(0, -j)} = \mathscr{C}_{(0, j)} \cdot S_{0}$.
 Therefore, if $p = 3$, all non-trivial $R_{p}$-colorings of $D$ are R-equivalent to each other.
 If $p \geq 5$, any non-trivial $R_{p}$-coloring of $D$ is R-equivalent to at least one of $\mathscr{C}_{(0, 1)}, \mathscr{C}_{(0, 2)}, \cdots, \mathscr{C}_{(0, \frac{p-1}{2})}$.
 
 Suppose that $p \geq 5$.
 For any integers $j$ and $j^{\prime}$ with $1 \leq j, j^{\prime} \leq \frac{p-1}{2}$, 
 \begin{align}
  W_{(0, j^{\prime})} - W_{(0, j)} = k n (j + j^{\prime})(j - j^{\prime}) \label{eq:2}
 \end{align}
 by the formula (\ref{eq:1}).
 Since $\gcd(k, p) = \gcd(n, p) = 1$ and $1 \leq j, j^{\prime} \leq \frac{p-1}{2}$, $(\ref{eq:2}) \equiv 0 \, (\hspace{-1.5ex} \mod p)$ if and only if $j - j^{\prime} \equiv 0 \, (\hspace{-1.5ex} \mod p)$.
 It follows that $W_{(0, j^{\prime})} \neq W_{(0, j)}$ if $j^{\prime} \neq j$.
 Thus, $\mathscr{C}_{(0, j^{\prime})}$ is not R-equivalent to $\mathscr{C}_{(0, j)}$ if $j^{\prime} \neq j$.
\end{proof}

In light of Proposition \ref{r-equivalent_r_p-colorings} and the argument in the proof of Theorem \ref{application_of_main_theorem}, we can restate Theorem \ref{application_of_main_theorem} as follows.

\begin{theorem}
 Let $p$ be an odd prime integer and $\alpha = k p$ \textup{(}$k \in \mathbb{Z}$\textup{)}.
 Suppose that $\gcd(k, p) = 1$ if $p \geq 5$.
 Then non-trivial $R_{p}$-colorings $\mathscr{C}$ and $\mathscr{C}^{\prime}$ of $D$ are R-equivalent if and only if there exists $S \in \mathrm{Inn}(R_{p})$ such that $\mathscr{C}^{\prime} = \mathscr{C} \cdot S$.
\end{theorem}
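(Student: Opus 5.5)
The ``if'' direction is immediate from Proposition \ref{r-equivalent_r_p-colorings}: if $\mathscr{C}^{\prime} = \mathscr{C} \cdot S$ with $S \in \mathrm{Inn}(R_{p})$ and $\mathscr{C}$ is non-trivial, then $\mathscr{C}$ is R-equivalent to $\mathscr{C}^{\prime}$.

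For the ``only if'' direction, I will reduce to the normal forms of Theorem \ref{application_of_main_theorem}. Its proof shows that every non-trivial coloring has the form $\mathscr{C}_{(0, j)} \cdot S$ for some $S \in \mathrm{Inn}(R_{p})$ and some $j$ with $1 \leq j \leq \frac{p-1}{2}$. Indeed, a non-trivial coloring is $\mathscr{C}_{(a,b)}$ with $a \neq b$. If $a = 0$, it is $\mathscr{C}_{(0,j)}$ or $\mathscr{C}_{(0,-j)} = \mathscr{C}_{(0,j)} \cdot S_{0}$. If $a \neq 0$, write $a = i$ and $b = i - j'$ with $j' \neq 0$. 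Then the displayed identity gives $\mathscr{C}_{(i,i-j')} = \mathscr{C}_{(0,j')} \cdot S_{i/2}$, where $i/2$ is read as $i/2$ or $(i+p)/2$ as in the proof, and we may replace $j'$ by $\pm j$ with $S_{0}$ if needed. So I write $\mathscr{C} = \mathscr{C}_{(0,j)} \cdot T$ and $\mathscr{C}^{\prime} = \mathscr{C}_{(0,j^{\prime})} \cdot T^{\prime}$ with $T, T^{\prime} \in \mathrm{Inn}(R_{p})$.

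Now suppose $\mathscr{C}$ and $\mathscr{C}^{\prime}$ are R-equivalent. By Proposition \ref{r-equivalent_r_p-colorings}, $\mathscr{C}_{(0,j)}$ is R-equivalent to $\mathscr{C}$, and $\mathscr{C}^{\prime}$ is R-equivalent to $\mathscr{C}_{(0,j^{\prime})}$, so $\mathscr{C}_{(0,j)}$ and $\mathscr{C}_{(0,j^{\prime})}$ are R-equivalent. The two cases then go as follows.
\begin{itemize}
 \item[(i)] If $p \geq 5$, the weight argument in the proof of Theorem \ref{application_of_main_theorem} (using Theorem \ref{invariant_of_R-equivalence} and formula (\ref{eq:1}) with $\gcd(k,p)=1$) forces $j = j^{\prime}$.
 \item[(ii)] If $p = 3$, then $\frac{p-1}{2} = 1$, so $j = j^{\prime} = 1$ automatically.
\end{itemize}
In either case $\mathscr{C}^{\prime} = \mathscr{C}_{(0,j)} \cdot T^{\prime} = (\mathscr{C} \cdot T^{-1}) \cdot T^{\prime} = \mathscr{C} \cdot (T^{-1} T^{\prime})$, using that the right action satisfies $(\mathscr{C} \cdot S) \cdot S' = \mathscr{C} \cdot (SS')$. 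Set $S = T^{-1} T^{\prime} \in \mathrm{Inn}(R_{p})$.

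The main point needing care is the action convention. The paper defines $\mathscr{C} \cdot S = S \circ \mathscr{C}$ and calls it a right action, which is consistent with $x \cdot S_{y} = x \ast y$ only if products in $\mathrm{Inn}$ are composed so that $\mathscr{C} \cdot (S S^{\prime}) = (\mathscr{C} \cdot S) \cdot S^{\prime}$. I only need that the maps $\mathscr{C} \mapsto \mathscr{C} \cdot S$ form a group action of $\mathrm{Inn}(R_{p})$ on $\mathrm{Col}_{R_{p}}(D)$, so that its orbits are well defined. Whichever composition convention is used, the orbit relation is an equivalence relation, and the argument above only uses orbit membership. Everything else is bookkeeping from the preceding proof.
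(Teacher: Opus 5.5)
Your proof is correct and follows the paper's own route. The ``if'' direction is Proposition~\ref{r-equivalent_r_p-colorings}, and the ``only if'' direction moves both colorings by inner automorphisms to normal forms $\mathscr{C}_{(0,j)}$ and $\mathscr{C}_{(0,j')}$, then forces $j = j'$ using the weight invariant when $p \geq 5$ and $\frac{p-1}{2} = 1$ when $p = 3$; your remark that only orbit membership matters, so the composition convention in $\mathrm{Inn}(R_{p})$ is irrelevant, is also sound.
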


\section*{Acknowledgments}
\label{sec:acknowledgments}

The author would like to express her sincere gratitude to her supervisor Professor Ayumu Inoue for his hearty encouragements and helpful suggestions.

\bibliographystyle{amsplain}

\end{document}